\theoremstyle{plain}
\newtheorem{thm}{Theorem}[section]
\newtheorem{lemma}[thm]{Lemma}
\newtheorem{proposition}[thm]{Proposition}
\theoremstyle{definition}
\newtheorem{definition}[thm]{Definition}
\theoremstyle{remark}
\newtheorem{remark}[thm]{Remark}
\newtheorem{example}[thm]{Example}
\DeclareMathOperator{\htt}{ht}
\DeclareMathOperator{\qf}{qf}
\DeclareMathOperator{\Max}{Max}
\DeclareMathOperator{\Cl}{Cl}
\DeclareMathOperator{\Pic}{Pic}
  \newcounter{xenumi}
  \newenvironment{xenumerate}{%
  \begin{list}{(\arabic{xenumi})}{
    \setcounter{xenumi}{1}\usecounter{xenumi}
    \setlength{\parsep}{4\p@ \@plus2\p@ \@minus\p@}
    \setlength{\topsep}{6\p@ \@plus2\p@ \@minus2\p@}
    \setlength{\itemsep}{2\p@ \@plus1\p@ \@minus\p@}
    \setlength{\labelwidth}{0mm}
    \setlength{\labelsep}{2mm}
    \setlength{\itemindent}{2mm}
    \setlength{\leftmargin}{0mm}
    \setlength{\listparindent}{0mm}
  }}{\end{list}}
\begin{document}

\title[\boldmath$t$-Class semigroups of Noetherian domains]{\boldmath$t$-Class semigroups of Noetherian domains}

\author{S. Kabbaj}
\address{Department of Mathematical Sciences, King Fahd University of Petroleum \& Minerals, P.O. Box 5046, Dhahran 31261, KSA}
\email{kabbaj@kfupm.edu.sa}

\author{A. Mimouni}
\address{Department of Mathematical Sciences, King Fahd University of Petroleum \& Minerals, P.O. Box 5046, Dhahran 31261, KSA}
\email{amimouni@kfupm.edu.sa}
\thanks{This work was funded by KFUPM under Project \# MS/t-Class/257.}

\date{\bf \today}
\subjclass[2000]{13C20, 13F05, 11R65, 11R29, 20M14}
\keywords{Class semigroup, $t$-class semigroup, $t$-ideal, $t$-closure, Clifford semigroup, Clifford $t$-regular, Boole $t$-regular, $t$-stable domain, Noetherian domain, strong Mori domain}

\begin{abstract}
The $t$-class semigroup of an integral domain $R$, denoted ${\mathcal S}_{t}(R)$, is the semigroup of fractional $t$-ideals modulo its subsemigroup of nonzero principal ideals with the operation induced by ideal $t$-multiplication. This paper investigates ring-theoretic properties of a Noetherian domain that reflect reciprocally in the Clifford or Boolean property of its $t$-class semigroup.
\end{abstract}

\maketitle

\begin{section}{Introduction}

Let $R$ be an integral domain. The class semigroup of $R$, denoted
${\mathcal S}(R)$, is the semigroup of nonzero fractional ideals
modulo its subsemigroup of nonzero principal ideals \cite{BS:1996}, \cite{ZZ:1994}. We
define the $t$-class semigroup of $R$, denoted ${\mathcal
S}_{t}(R)$, to be the semigroup of fractional $t$-ideals modulo its
subsemigroup of nonzero principal ideals, that is, the semigroup of
the isomorphy classes of the $t$-ideals of $R$ with the operation
induced by $t$-multiplication. Notice that ${\mathcal S}_{t}(R)$ stands
as the $t$-analogue of ${\mathcal S}(R)$, whereas the class
group $\Cl(R)$ is the $t$-analogue of the Picard group $\Pic(R)$. In general, we have
 $$\Pic(R)\subseteq \Cl(R)\subseteq {\mathcal{S}_{t}}(R)\subseteq {\mathcal S}(R)$$
where the first and third containments turn into equality if $R$ is a Pr\"ufer domain and
the second does so if $R$ is a Krull domain.

A commutative semigroup $S$ is said to be Clifford if every element
$x$ of $S$ is (von Neumann) regular, i.e., there exists $a\in S$
such that $x=ax^{2}$. A Clifford semigroup $S$ has the ability to stand as a disjoint union of subgroups
$G_e$, where $e$ ranges over the set of idempotent elements of $S$,
and $G_e$ is the largest subgroup of $S$ with identity equal to $e$
(cf. \cite{Ho:1995}). The semigroup $S$ is said to be Boolean if for each
$x\in S$, $x=x^{2}$. A domain $R$ is said to be \emph{Clifford}
(resp., \emph{Boole}) \emph{$t$-regular} if $S_{t}(R)$ is a Clifford
(resp., Boolean) semigroup.

This paper investigates the $t$-class semigroups of Noetherian domains. Precisely, we study conditions under which
$t$-stability characterizes $t$-regularity. Our first result, Theorem~\ref{sec:2.2}, compares Clifford $t$-regularity to various forms of stability. Unlike regularity, Clifford (or even Boole) $t$-regularity over Noetherian domains does not force the $t$-dimension to be one (Example~\ref{sec:2.4}). However, Noetherian strong $t$-stable domains happen to have $t$-dimension 1. Indeed, the main result, Theorem~\ref{sec:2.6}, asserts that ``\emph{$R$ is strongly $t$-stable if and only if $R$ is Boole $t$-regular and $t$-$\dim(R)=1$}.'' This result is not valid for Clifford $t$-regularity as shown by Example~\ref{sec:2.9}. We however extend this result to the Noetherian-like larger class of strong Mori domains (Theorem~\ref{sec:2.10}).

All rings considered in this paper are integral domains. Throughout, we shall use $\qf(R)$ to denote the quotient
field of a domain $R$, $\overline{I}$ to denote the isomorphy class of a $t$-ideal $I$ of $R$ in $S_{t}(R)$, and $\Max_{t}(R)$ to denote the set of maximal $t$-ideals of $R$.
\end{section}

\begin{section}{Main results}\label{sec:2}

We recall that for a nonzero fractional ideal $I$ of $R$,
$I_{v}:=(I^{-1})^{-1}$, $I_{t}:=\bigcup J_{v}$ where $J$
ranges over the set of finitely generated subideals of $I$, and
$I_{w}:=\bigcup (I:J)$ where the union is taken over all
finitely generated ideals $J$ of $R$ with $J^{-1}=R$.  The ideal $I$ is said to be
divisorial or a $v$-ideal if $I=I_{v}$, a $t$-ideal if $I=I_{t}$,
and a $w$-ideal if $I=I_{w}$. A domain $R$ is called \emph{strong Mori}
if $R$ satisfies the ascending chain condition on $w$-ideals \cite{FM:1999}. Trivially, a Noetherian domain is strong Mori and a strong
Mori domain is Mori. Suitable background on strong Mori domains is \cite{FM:1999}. Finally, recall that the \emph{$t$-dimension} of $R$, abbreviated
$t$-$\dim(R)$, is by definition equal to the length of the longest chain of $t$-prime ideals of $R$.

The following lemma displays necessary and sufficient conditions for $t$-regularity. We often will be appealing to this lemma without explicit mention.

\begin{lemma}[{\cite[Lemma 2.1]{KM2:2007}}]\label{sec:2.1}
Let $R$ be a domain.
\begin{enumerate}
\item $R$ is Clifford $t$-regular if and only if, for each $t$-ideal $I$ of $R$, $I=(I^{2}(I:I^{2}))_{t}$.
\item $R$ is Boole $t$-regular if and only if, for each $t$-ideal $I$ of $R$, $I=c(I^{2})_{t}$ for some $c\not=0\in \qf(R)$.\qed
\end{enumerate}
\end{lemma}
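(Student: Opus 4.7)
The plan is to unravel the definitions of Clifford and Boole $t$-regularity and to translate the condition ``$\overline{I}$ is a regular (resp., idempotent) element of $\mathcal{S}_t(R)$'' directly into an identity of $t$-ideals. Recall that two nonzero $t$-ideals $I,J$ satisfy $\overline{I}=\overline{J}$ in $\mathcal{S}_t(R)$ precisely when $I=cJ$ for some $c\in \qf(R)^{*}$, and the semigroup operation is $\overline{I}\cdot\overline{J}=\overline{(IJ)_{t}}$. This dictionary is the whole engine of the proof.

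Part (2) is then essentially a tautology: $R$ is Boole $t$-regular iff $\overline{I}=\overline{I}^{2}=\overline{(I^{2})_{t}}$ for every $t$-ideal $I$, which by the dictionary means exactly that $I=c(I^{2})_{t}$ for some $c\in\qf(R)^{*}$. No further computation is required.

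For part (1), I would first note that $\overline{I}$ is von Neumann regular in $\mathcal{S}_t(R)$ iff there is a $t$-ideal $A$ with $\overline{I}=\overline{A}\cdot\overline{I}^{2}$, equivalently $I=c\,(AI^{2})_{t}$ for some nonzero $c\in\qf(R)$. For the forward direction, starting from such an equality I would observe that $(cA)I^{2}\subseteq c(AI^{2})_{t}=I$, so $cA\subseteq (I:I^{2})$; applying $t$-closure yields
\[
I=(cAI^{2})_{t}\subseteq \bigl((I:I^{2})I^{2}\bigr)_{t}.
\]
The reverse inclusion is automatic because $(I:I^{2})I^{2}\subseteq I$ and $I$ is a $t$-ideal. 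For the converse, if $I=(I^{2}(I:I^{2}))_{t}$ then the fractional ideal $A:=(I:I^{2})_{t}$ does the job, since $(AI^{2})_{t}=((I:I^{2})I^{2})_{t}=I$, giving $\overline{I}=\overline{A}\cdot\overline{I}^{2}$.

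The main obstacle, such as it is, is simply bookkeeping with the $t$-operation, together with one small sanity check: one must verify that $(I:I^{2})$ is a genuine nonzero fractional ideal so that $(I:I^{2})_{t}$ represents a bona fide element of $\mathcal{S}_{t}(R)$. This is immediate from the inclusion $I^{-1}\subseteq (I:I^{2})$, which follows from $I^{-1}\cdot I^{2}=(I^{-1}I)I\subseteq I$. With that in hand, both directions reduce to the manipulations above.
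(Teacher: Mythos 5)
Your proof is correct. The paper does not prove this lemma at all---it is imported verbatim from \cite[Lemma 2.1]{KM2:2007}---and your argument is the expected one: translate $\overline{I}=\overline{A}\cdot\overline{I}^{2}$ (resp.\ $\overline{I}=\overline{I}^{2}$) through the dictionary ``$\overline{I}=\overline{J}$ iff $I=cJ$, $c\in\qf(R)^{*}$'' and ``$\overline{I}\cdot\overline{J}=\overline{(IJ)_{t}}$''; the only unstated ingredient is the routine star-operation identity $\bigl(A_{t}B\bigr)_{t}=(AB)_{t}$, used when you replace $A=(I:I^{2})_{t}$ by $(I:I^{2})$ inside the $t$-closure, and your check that $(I:I^{2})$ is a nonzero fractional ideal takes care of the only genuine point of well-definedness.
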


An ideal $I$ of a domain $R$ is said to be \emph{$L$-stable}
(here $L$ stands for Lipman) if $R^{I}:=\bigcup_{n\geq1}(I^{n}:I^{n})=(I:I)$,
and $R$ is called $L$-stable if every nonzero ideal is
$L$-stable. Lipman introduced the notion of stability in the specific setting of
one-dimensional commutative semi-local Noetherian rings in order to
give a characterization of Arf rings; in this context, L-stability
coincides with Boole regularity \cite{Lip:1971}.

Next, we state our first theorem of this section.

\begin{thm}\label{sec:2.2}
Let $R$ be a Noetherian domain and consider the following statements:
\begin{enumerate}
\item $R$ is Clifford $t$-regular;
\item Each $t$-ideal $I$ of $R$ is $t$-invertible in $(I:I)$;
\item Each $t$-ideal is L-stable.
\end{enumerate}
Then $(1) \Longrightarrow (2) \Longrightarrow (3)$. Moreover, if $t$-$\dim(R)=1$, then $(3) \Longrightarrow (1)$.
\end{thm}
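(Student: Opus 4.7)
Throughout, fix a $t$-ideal $I$ of $R$ and set $T = (I:I)$, $J = (I:I^2)$; an elementary check gives $J = (T :_{\qf(R)} I)$, so $J$ plays the role of the inverse of $I$ in the overring $T$. Since $R$ is Noetherian and $T$ is a finitely generated $R$-module, $T$ is Noetherian as well, whence $t$- and $v$-closures coincide on fractional $T$-ideals. For $(1) \Longrightarrow (2)$, Lemma~\ref{sec:2.1}(1) yields $(I^2 J)_t = I$. A short manipulation shows $(T :_T IJ) = (J:J)$, so $(IJ)_{v_T} = (T : (J:J))$; since $T \subseteq (J:J)$ always, the $t$-invertibility of $I$ in $T$ reduces to showing $(J:J) \subseteq T$. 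Any $x \in (J:J)$ satisfies $xJ \subseteq J$, hence $xI^2 J \subseteq I^2 J$, and taking $R$-$t$-closure yields $xI = (xI^2J)_t \subseteq (I^2 J)_t = I$, i.e., $x \in T$, as desired.

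For $(2) \Longrightarrow (3)$, the hypothesis reads $(IJ)_{t_T} = T$, and multiplicativity of the $t$-operation together with $T(IJ)=IJ$ gives $(I^nJ^n)_{t_T} = ((IJ)^n)_{t_T} = T$ for every $n \geq 1$. If $x \in (I^n : I^n)$, then $x(I^n J^n) \subseteq I^n J^n$, so $t_T$-closing yields $xT \subseteq T$, whence $x \in T = (I:I)$. Combined with the trivial reverse inclusion, this gives $R^I = (I:I)$, i.e., $L$-stability.

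For $(3) \Longrightarrow (1)$ under $t$-$\dim(R) = 1$, the plan is to strengthen $t$-invertibility of $I$ in $T$ to the genuine identity $IJ = T$, from which $I = IT = I \cdot IJ = I^2 J$ delivers Clifford $t$-regularity at once. To that end I would localize at each maximal $t$-ideal $M$ of $R$: by the dimension hypothesis, $MR_M$ is the unique nonzero $t$-prime of the local Noetherian domain $R_M$; $L$-stability descends to $IR_M$; and Lipman's classical stability theorem for $L$-stable ideals in a one-dimensional Noetherian local setting then gives that $IR_M$ is stable, hence invertible in its endomorphism ring $TR_M$. Gluing over all maximal $t$-ideals should produce $IJ = T$ globally. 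The main obstacle is precisely this Lipman step: the classical statement is for Krull dimension one, whereas $R_M$ is here only guaranteed to be of $t$-dimension one, possibly of strictly larger classical dimension; adapting Lipman's argument to this setting (or, alternatively, working with the $w$-operation, whose localizations are attuned to maximal $t$-ideals) is the crux. One must also verify that $t$-ideal structure and $L$-stability do descend correctly under these localizations in the Noetherian framework.
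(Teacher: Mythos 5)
Your arguments for $(1)\Longrightarrow(2)$ and $(2)\Longrightarrow(3)$ are correct and follow essentially the paper's route: you use the regularity identity $(I^{2}(I:I^{2}))_{t}=I$ to show that $I(T:I)$ has trivial dual over $T=(I:I)$ (you phrase this as $(J:J)=T$ for $J=(T:I)$, the paper as $(T:I(T:I))=T$; the two are equivalent via $(T:IJ)=((T:I):J)=(J:J)$), and then invoke Noetherianity of $T$ to pass from $v_{T}$- to $t_{T}$-invertibility. Your one-shot deduction of $L$-stability from $((IJ)^{n})_{t_{T}}=T$ is a clean compression of the paper's iteration.

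The problem is $(3)\Longrightarrow(1)$, which you leave genuinely unfinished: you single out as ``the crux'' the worry that $R_{M}$ may have Krull dimension larger than one, so that Lipman's stability lemma would not apply. That obstacle is not real, and the observation that dissolves it is the one missing ingredient. In any domain a height-one prime is a $t$-prime: it is minimal over the principal ideal $(a)$ for any nonzero $a$ it contains, and a prime minimal over a $t$-ideal is a $t$-ideal. Hence if some $t$-maximal ideal $M$ had height at least $2$, one could choose a nonzero prime $Q\subsetneq M$ and a height-one prime $P\subseteq Q$, producing a chain $(0)\subsetneq P\subsetneq M$ of $t$-primes and contradicting $t$-$\dim(R)=1$. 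So every $t$-maximal ideal has height one, $\dim(R_{M})=1$, and Lipman's Lemma 1.11 applies verbatim to the $L$-stable ideal $IR_{M}$, yielding $I^{2}R_{M}=xIR_{M}$ and hence $I^{2}R_{M}(IR_{M}:I^{2}R_{M})=IR_{M}$; this is exactly what the paper does. The gluing points you defer also need to be said, and they are standard in this Noetherian setting: $(I^{2}(I:I^{2}))_{t}=(I^{2}(I:I^{2}))_{v}$ since Noetherian domains are TV-domains, the $v$-operation commutes with localization at $M$ for $v$-finite ideals (Kang), and two $v$-ideals coincide once they agree at every $t$-maximal ideal. None of these is problematic, but as written your proposal does not close the Lipman step, which is the heart of the implication.
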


\begin{proof}
$(1)\Longrightarrow (2)$. Let $I$ be a
$t$-ideal of a domain $A$. Then for each ideal $J$ of $A$, $(I:J) =
(I:J_{t})$. Indeed, since $J\subseteq J_{t}$, then $(I:J_{t})\subseteq
(I:J)$. Conversely, let $x\in (I:J)$. Then $xJ\subseteq I$ implies
that $xJ_{t}=(xJ)_{t}\subseteq I_{t}=I$, as claimed. So $x\in (I:J_{t})$ and
therefore $(I:J)\subseteq (I:J_{t})$. Now, let $I$ be a $t$-ideal of
$R$, $B=(I:I)$ and $J=I(B:I)$. Since ${\overline I}$ is regular in
${\mathcal S_t}(R)$, then $I = (I^{2}(I:I^{2}))_{t}= (IJ)_{t}$. By
the claim, $B=(I:I)=(I:(IJ)_{t})=(I:IJ)= ((I:I):J)=(B:J)$. Since $B$
is Noetherian, then $(I(B:I))_{t_{1}}=J_{t_{1}}=J_{v_{1}} =B$, where
$t_{1}$- and $v_{1}$ denote the $t$- and $v$-operations with respect
to $B$. Hence $I$ is $t$-invertible as an ideal of $(I:I)$.

$(2)\Longrightarrow (3)$. Let $n\geq 1$, and $x\in (I^{n}:I^{n})$.
Then $xI^{n}\subseteq I^{n}$ implies that $xI^{n}(B:I)\subseteq
I^{n}(B:I)$. So $x(I^{n-1})_{t_{1}}=x(I^{n}(B:I))_{t_{1}}\subseteq
(I^{n}(B:I))_{t_{1}}= (I^{n-1})_{t_{1}}$. Now, we iterate this
process by composing the two sides by $(B:I)$, applying the
$t$-operation with respect to $B$ and using the fact that $I$ is
$t$-invertible in $B$, we obtain that $x\in (I:I)$. Hence $I$ is
L-stable.

$(3)\Longrightarrow (1)$ Assume that $t$-$\dim(R)=1$. Let $I$ be a
$t$-ideal of $R$ and $J = (I^{2}(I:I^{2}))_{t}=
(I^{2}(I:I^{2}))_{v}$ (since $R$ is Noetherian, and so a
$TV$-domain). We wish to show that $I=J$. By \cite[Proposition
2.8.(3)]{Kg:1987}, it suffices to show that $IR_{M}=JR_{M}$ for each
$t$-maximal ideal $M$ of $R$. Let $M$ be a $t$-maximal ideal of $R$.
If $I\not\subseteq M$, then $J\not\subseteq M$. So $IR_{M} = JR_{M}
= R_{M}$. Assume that $I\subseteq M$. Since $t$-$\dim(R) =1$, then
$\dim(R)_{M}= 1$. Since $IR_{M}$ is L-stable, then by
\cite[Lemma 1.11]{Lip:1971} there exists a nonzero element $x$ of $R_{M}$
such that $I^{2}R_{M} = xIR_{M}$. Hence $(IR_{M}:I^{2}R_{M})=
(IR_{M}:xIR_{M}) = x^{-1}(IR_{M}:IR_{M})$. So
$I^{2}R_{M}(IR_{M}:I^{2}R_{M})=xIR_{M}x^{-1}(IR_{M}:IR_{M})=IR_{M}$.
Now, by \cite[Lemma 5.11]{Kg:1987},
$JR_{M}=((I^{2}(I:I^{2}))_{v})R_{M}=(I^{2}(I:I^{2}))R_{M})_{v}=
(I^{2}R_{M}(IR_{M}:I^{2}R_{M}))_{v}=(IR_{M})_{v}
=I_{v}R_{M}=I_{t}R_{M}=IR_{M}$.
\end{proof}

According to \cite[Theorem 2.1]{Ba4:2001} or \cite[Corollary 4.3]{KM:2003}, a
Noetherian domain $R$ is Clifford regular if and only if $R$ is
stable if and only if $R$ is $L$-stable and $\dim(R)=1$. Unlike
Clifford regularity, Clifford (or even Boole) $t$-regularity does
not force a Noetherian domain $R$ to be of $t$-dimension one. In order to illustrate this fact, we first establish the transfer of Boole $t$-regularity to pullbacks issued from local Noetherian domains.

\begin{proposition}\label{sec:2.3}
Let $(T, M)$ be a local Noetherian domain with residue field $K$ and $\phi:T\longrightarrow K$ the canonical
surjection. Let $k$ be a proper subfield of $K$ and $R:=\phi^{-1}(k)$ the pullback issued from the following diagram of canonical
homomorphisms:
\[\begin{array}{ccl}
R            & \longrightarrow                 & k\\
\downarrow   &                                 & \downarrow\\
T            & \stackrel{\phi}\longrightarrow  & K=T/M
\end{array}\]
Then $R$ is Boole $t$-regular if and only if so is $T$.
\end{proposition}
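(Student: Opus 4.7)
My plan is to translate the Boolean $t$-regularity condition of Lemma~\ref{sec:2.1}(2) between $R$ and $T$ via a comparison of their $t$-operations on ideals contained in the common maximal ideal $M$. I would first record standard pullback facts: $R$ is local with maximal ideal $M$ (shared with $T$), $\qf(R)=\qf(T)=:L$ (since for any nonzero $m\in M$ and any $t\in T$ one has $mt\in M\subseteq R$), and the hypothesis $k\subsetneq K$ together with the standard pullback calculation yields $T=(M:_L M)$.

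The technical core is the following comparison lemma: for any nonzero fractional ideal $I$ of $R$ with $I\subseteq M$ that is not $T$-invertible, one has $(R:_L I)=(T:_L I)$. The inclusion $(R:I)\subseteq(T:I)$ is automatic from $R\subseteq T$; the reverse rests on the observation that in the local Noetherian ring $T$, any non-invertible ideal $I$ satisfies $(T:I)\cdot I\subseteq M\subseteq R$, so $(T:I)\subseteq(R:I)$. Iterating the comparison (and reducing to the case in which the dual ideal lies again inside $M$) gives $I_{v_R}=I_{v_T}$, and passage to $t$-closures via directed unions over finitely generated subideals then yields $I_{t_R}=I_{t_T}$ and analogously $(I^{2})_{t_R}=(I^{2})_{t_T}$ in the non-$T$-invertible regime.

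Granting the dictionary, the transfer of Boole $t$-regularity is routine via Lemma~\ref{sec:2.1}(2). For the implication ``$T$ Boole $t$-regular $\Longrightarrow$ $R$ Boole $t$-regular'': take a $t$-ideal $I$ of $R$, scale to achieve $I\subseteq M$ (which preserves the Boolean identity $I=c(I^{2})_{t}$ up to rescaling the witness $c$), and dispose of the trivial case $I=R$ and of the case $I$ principal in $T$ (where $\overline{I}$ is already idempotent in $\mathcal{S}_{t}(T)$, which descends through the dictionary). In the remaining case Boole $t$-regularity of $T$ supplies $c\in L$ with $I=c(I^{2})_{t_{T}}=c(I^{2})_{t_{R}}$, and $R$ is Boole $t$-regular. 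The converse is symmetric: starting with a $t$-ideal $J$ of $T$ inside $M$, invoke Boole $t$-regularity of $R$ applied to $J$ viewed as an $R$-ideal and use the dictionary to come back to $T$.

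The principal obstacle will be the comparison lemma, together with the verification that the $T$-invertibility exceptions correspond only to trivially idempotent classes in both $\mathcal{S}_{t}(R)$ and $\mathcal{S}_{t}(T)$. The subtlety is that $R$ is in general non-Noetherian (the extension $k\subsetneq K$ is typically of infinite degree), so one cannot simply transport $t$-operations by finite generation; instead, one must work with directed unions over finitely generated subideals and rely on the equality $T=(M:_L M)$. The Noetherian hypothesis on $T$ enters exclusively through the fact that any proper ideal of the local ring $T$ lies in its maximal ideal, which is what licenses $(T:I)\cdot I\subseteq M$ and hence the transition from $T$-closures to $R$-closures.
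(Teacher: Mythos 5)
Your core mechanism is the same as the paper's: for an ideal $I\subseteq M$ that is not $T$-invertible, the trace $(T:I)I$ is a proper integral ideal of the local ring $T$, hence lies in $M\subseteq R$, so $(R:I)=(T:I)$, and one then transfers the Boolean identity through closures. The genuine gap is in the transfer. Your claim that $(R:I)=(T:I)$ upgrades to $I_{t_{R}}=I_{t_{T}}$ ``via directed unions over finitely generated subideals'' does not go through as stated: the finitely generated $R$-subideals and $T$-subideals of $I$ are different families (a principal subideal has $(aR)_{v_{R}}=aR$ but $(aT)_{v_{T}}=aT$), and what the dual comparison actually gives is only $I_{t_{R}}\subseteq I_{v_{R}}=I_{v_{T}}=I_{t_{T}}$, the last equality because $T$ is Noetherian. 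Closing the first inclusion requires $t_{R}=v_{R}$ (at least $v_{R}$-finiteness of the relevant ideals), and this is exactly what you need in the direction ``$T$ Boole $t$-regular $\Rightarrow R$ Boole $t$-regular'': the dual computation yields $(I^{2})_{v_{R}}=cI_{v_{R}}$, whereas Lemma~\ref{sec:2.1}(2) demands $(I^{2})_{t_{R}}=cI$. Since you explicitly decline to assume $R$ Noetherian, you have removed the very hypothesis that licenses this step. The paper's proof closes it by citing Brewer--Rutter (or Gabelli--Houston) to get that $R$ is a local \emph{Noetherian} domain with maximal ideal $M$; it then never compares $t$-closures at all, but works exclusively with duals, using $(R:J)=(R:J_{t_{R}})$, and converts back at the very end via $t=v$ on a Noetherian (hence TV) domain.

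Two secondary points. First, your recorded fact $T=(M:_{L}M)$ is false in general: for $T=K[[s^{2},s^{3}]]$ one has $s\in(M:M)\setminus T$. Only the trivial inclusion $T\subseteq(M:M)$ is true, and it is what is actually needed --- namely, to show that a non-invertible $t$-ideal $I$ of $R$ is a $T$-module, via $T\subseteq(M:M)\subseteq(II^{-1})^{-1}=(I:I)$; this step is essential (otherwise the case division ``$I$ principal in $T$ or not'' and the dual comparison are unavailable) and should be made explicit rather than absorbed into the ``dictionary.'' Second, your closing remark that Noetherianity of $T$ enters \emph{exclusively} through ``proper ideals of $T$ lie in $M$'' is off: that is mere locality. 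Noetherianity of $T$ is what gives $t_{T}=v_{T}$, without which the identity $(J^{2})_{v_{T}}=cJ_{v_{T}}$ obtained from the dual comparison would not yield the required $(J^{2})_{t_{T}}=cJ$.
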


\begin{proof}
By \cite[Theorem 4]{BR:1976} (or \cite [Theorem 4.12]{GH:1997}) $R$ is
a Noetherian local domain with maximal ideal $M$. Assume that $R$ is
Boole $t$-regular. Let $J$ be a $t$-ideal of $T$. If $J(T:J)=T$,
then $J=aT$ for some $a\in J$ (since $T$ is local). Then $J^{2}=aJ$
and so $(J^{2})_{t_{1}}=aJ$, where $t_{1}$ is the $t$-operation
with respect to $T$ (note that $t_{1}=v_{1}$ since $T$ is
Noetherian), as desired. Assume that $J(T:J)\subsetneq T$.  Since $T$
is local with maximal ideal $M$, then $J(T:J)\subseteq M$. Hence
$J^{-1}=(R:J)\subseteq (T:J)\subseteq (M:J)\subseteq J^{-1}$ and
therefore $J^{-1}=(T:J)$. So
$(T:J^{2})=((T:J):J)=((R:J):J)=(R:J^{2})$. Now, since $R$ is Boole
$t$-regular, then there exists $0\not =c\in \qf(R)$ such that
$(J^{2})_{t}=((J_{t})^{2})_{t}=cJ_{t}$. Then
$(T:J^{2})=(R:J^{2})=(R:(J^{2})_{t})=(R:cJ_{t})=c^{-1}(R:J_{t})=c^{-1}(R:J)=c^{-1}(T:J)$.
Hence $(J^{2})_{t_{1}}=(J^{2})_{v_{1}}=cJ_{v_{1}}=cJ_{t_{1}}=cJ$, as
desired. It follows that $T$ is Boole $t$-regular.

Conversely, assume that $T$ is Boole $t$-regular and let $I$ be a
$t$-ideal of $R$. If $II^{-1}=R$, then $I=aR$ for some $a\in I$. So
$I^{2}=aI$, as desired. Assume that $II^{-1}\subsetneq R$. Then
$II^{-1}\subseteq M$. So $T\subseteq (M:M)=M^{-1}\subseteq
(II^{-1})^{-1}= (I_{v}:I_{v})=(I:I)$. Hence $I$ is an ideal of $T$.
If $I(T:I)=T$, then $I=aT$ for some $a\in I$ and so $I^{2}=aI$, as
desired. Assume that $I(T:I)\subsetneq T$. Then $I(T:I)\subseteq M$,
and so $I^{-1}\subseteq (T:I)\subseteq (M:I)\subseteq I^{-1}$. Hence
$I^{-1}=(T:I)$. So $(T:I^{2})=((T:I):I)=((R:I):I)=(R:I^{2})$. But
since $T$ is Boole $t$-regular, then there exists $0\not =c\in
qf(T)=\qf(R)$ such that
$(I^{2})_{t_{1}}=((I_{t_{1}})^{2})_{t_{1}}=cI_{t_{1}}$. Then
$(R:I^{2})=(T:I^{2})=(T:(I^{2})_{t_{1}})=(T:cI_{t_{1}})=c^{-1}(T:I_{t_{1}})=c^{-1}(T:I)=c^{-1}(R:I)$.
Hence $(I^{2})_{t}=(I^{2})_{v}=cI_{v}=cI_{t}=cI$, as
desired. It follows that $R$ is Boole $t$-regular.
\end{proof}

Now we are able to build an example of a Boole $t$-regular Noetherian domain with $t$-dimension $\gneqq 1$.

\begin{example}\label{sec:2.4}
Let $K$ be a field, $X$ and $Y$ two indeterminates over $K$, and $k$ a
proper subfield of $K$. Let $T:=K[[X,Y]]=K+M$ and $R:=k+M$ where $M:=(X,Y)$. Since $T$ is a
UFD, then $T$ is Boole $t$-regular \cite[Proposition 2.2]{KM2:2007}. Further, $R$ is a Boole $t$-regular
Noetherian domain by Proposition~\ref{sec:2.3}. Now $M$ is a $v$-ideal of $R$, so that $t$-$\dim(R)=\dim(R)=2$.
\end{example}

Recall that an ideal $I$ of a domain $R$ is said to be \emph{stable} (resp.,
\emph{strongly stable}) if $I$ is invertible (resp., principal) in its
endomorphism ring $(I:I)$, and $R$ is called a stable (resp.,
strongly stable) domain provided each nonzero ideal of $R$ is stable
(resp., strongly stable). Sally and Vasconcelos \cite{SV:1973} used this
concept to settle Bass'conjecture on one-dimensional Noetherian
rings with finite integral closure. Recall that a stable domain is
$L$-stable \cite[Lemma 2.1]{AHP:1987}. For recent developments on
stability, we refer the reader to \cite{AHP:1987} and \cite{O1:1998, O2:2001, O3:2002}. By analogy, we define the following concepts:

\begin{definition}\label{sec:2.5}
A domain $R$ is \emph{$t$-stable} if each $t$-ideal of $R$ is stable, and $R$ is \emph{strongly $t$-stable} if each $t$-ideal of $R$ is strongly stable.
\end{definition}

Strong $t$-stability is a natural stability condition that best suits Boolean $t$-regularity. Our next theorem is a
satisfactory $t$-analogue for Boolean regularity \cite[Theorem 4.2]{KM:2003}.

\begin{thm}\label{sec:2.6}
Let $R$ be a Noetherian domain. The following conditions are equivalent:
\begin{enumerate}
\item $R$ is strongly $t$-stable;
\item $R$ is Boole $t$-regular and $t$-$\dim(R)=1$.
\end{enumerate}
\end{thm}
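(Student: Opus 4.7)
The plan is to prove the two directions separately, treating $(1)\Rightarrow(2)$ formally and $(2)\Rightarrow(1)$ by a local-to-global argument.

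For $(1)\Rightarrow(2)$, let $I$ be a $t$-ideal of $R$ and set $B=(I:I)$. Strong $t$-stability writes $I=aB$ with $a\in I$, whence $I^{2}=a^{2}B=aI$, so $(I^{2})_{t}=aI$ and $I=a^{-1}(I^{2})_{t}$; Lemma~\ref{sec:2.1}(2) then yields Boole $t$-regularity. To bound the dimension, apply the same representation to any maximal $t$-ideal $M=aB$ with $a\in M$: then $M^{2}=aM\subseteq aR$, so $M$ is a prime minimal over the principal ideal $aR$, and Krull's principal ideal theorem forces $\htt(M)\leq 1$. Since every $t$-prime sits inside a maximal $t$-ideal we conclude $t$-$\dim(R)\leq 1$; the reverse inequality is standard for a Noetherian non-field.

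For $(2)\Rightarrow(1)$, fix a $t$-ideal $I$ of $R$, set $B=(I:I)$, and select $c\in\qf(R)^{\times}$ via Boole $t$-regularity so that $I=c(I^{2})_{t}=c(I^{2})_{v}$ (using $t=v$ in the Noetherian setting); this forces $cI\subseteq B$. I claim $I=c^{-1}B$. Since Boole $t$-regularity implies Clifford $t$-regularity, Theorem~\ref{sec:2.2} together with $t$-$\dim(R)=1$ makes $I$ both $L$-stable and $t$-invertible in $B$. At each maximal $t$-ideal $M$ the localization $R_{M}$ is one-dimensional local Noetherian with $IR_{M}$ $L$-stable, so Lipman's Lemma~1.11 (as in the proof of Theorem~\ref{sec:2.2}) furnishes $x_{M}\in IR_{M}$ with $I^{2}R_{M}=x_{M}IR_{M}$; then for any $i\in IR_{M}$ the inclusion $i\cdot IR_{M}\subseteq(IR_{M})^{2}=x_{M}IR_{M}$ gives $i\in x_{M}(IR_{M}:IR_{M})=x_{M}BR_{M}$, upgrading this to the local strong stability $IR_{M}=x_{M}BR_{M}$. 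Using the commutation of the $v$-operation with localization at maximal $t$-ideals in a Noetherian domain (cf.\ \cite[Lemma~5.11]{Kg:1987}) together with the divisoriality of $BR_{M}=(IR_{M}:IR_{M})$ as a fractional ideal of $R_{M}$, the Boole identity localizes to $IR_{M}=c(I^{2}R_{M})_{v}=cx_{M}^{2}BR_{M}$, so comparing with $IR_{M}=x_{M}BR_{M}$ yields $cx_{M}\in(BR_{M})^{\times}$ and $c^{-1}BR_{M}=x_{M}BR_{M}=IR_{M}$. Since equality of fractional $t$-ideals at every maximal $t$-ideal implies global equality (cf.\ \cite[Proposition~2.8(3)]{Kg:1987}), $I=c^{-1}B$ and $R$ is strongly $t$-stable.

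The main obstacle is this final patching step: the Lipman generators $x_{M}$ are purely local and cannot be assembled directly into a global generator, since locally principal ideals over semilocal finite extensions can fail to be globally principal. The canonical global element $c^{-1}$ supplied by Boole $t$-regularity is what rescues the situation; the technical heart is the verification that $c^{-1}$ matches each $x_{M}$ modulo a unit of $BR_{M}$, which hinges on the divisoriality of the overring $BR_{M}$ inside $R_{M}$ and the good behavior of the $v$-operation under localization at maximal $t$-ideals.
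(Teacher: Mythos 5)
Your argument is correct, and it shares the paper's overall skeleton: the forward direction splits into ``clearly Boole $t$-regular'' plus a height bound on maximal $t$-ideals, and the converse is a local--global argument at maximal $t$-ideals in which the single constant $c$ furnished by Boole $t$-regularity is exactly what patches the local generators together. But both of your key sub-arguments differ from the paper's. For the dimension bound the paper proves Lemma~\ref{sec:2.7} under the weaker hypothesis of $t$-stability, which forces it to pass to the overring $(P_{2}:P_{2})$ and invoke Going-Up and INC; your observation that $M=aB$ gives $M^{2}=aM\subseteq aR$, so that $M$ is minimal over the principal ideal $aR$ and Krull's principal ideal theorem applies directly in $R$, is shorter and more elementary, at the cost of using the full strength of \emph{strong} $t$-stability --- which is all this theorem needs. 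For the local step the paper proves Lemma~\ref{sec:2.8} by showing that the trace ideal $J=I(T:I)$ satisfies $(T:J)=(J:J)=T$, hence $J_{v_{1}}=T$, and then rules out $J$ being proper using $t_{1}=v_{1}$ and one-dimensionality of $T$; you instead route through Theorem~\ref{sec:2.2} (Clifford implies $L$-stable) and Lipman's Lemma~1.11 \cite{Lip:1971} to manufacture a local generator $x_{M}$, and then identify $x_{M}$ with $c^{-1}$ up to a unit of $BR_{M}$ by localizing the Boole identity. Both are sound; the paper's route is self-contained modulo standard facts, while yours reuses machinery already deployed in Theorem~\ref{sec:2.2} and makes the role of $c$ more transparent. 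Two small points to record explicitly: the reverse inclusion $x_{M}BR_{M}\subseteq IR_{M}$ requires $x_{M}\in IR_{M}$, which does hold because Lipman's element is a reduction lying in the ideal, so cite the lemma in that form; and the localization identities $(I^{2})_{v}R_{M}=(I^{2}R_{M})_{v}$ and $(I:I)R_{M}=(IR_{M}:IR_{M})$ that you use implicitly are justified for Noetherian domains exactly as in the paper's Fact~1 and Fact~2 (via \cite{Kg:1987}).
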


The proof relies on the following lemmas.

\begin{lemma}\label{sec:2.7}
Let $R$ be a $t$-stable Noetherian domain. Then $t$-$\dim(R)=1$.
\end{lemma}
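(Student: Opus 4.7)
The plan is to argue by contradiction. Suppose $t$-$\dim(R)\geq 2$ and pick a $t$-prime $P$ of $R$ with $\htt(P)\geq 2$. By the $t$-stability hypothesis, $P$ is invertible in its endomorphism ring $B:=(P:P)$, that is, $P(B:P)=B$. The goal is to descend to the localization $R_{P}$ and exhibit a contradiction there.

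First I would verify that stability descends to localizations. Since $R$ is Noetherian, $P$ is finitely generated, and a direct check using that fact yields $B_{P}=(PR_{P}:PR_{P})_{R_{P}}$ and $(B:P)R_{P}=(B_{P}:PR_{P})_{B_{P}}$. Localizing $P(B:P)=B$ at $P$ then shows that $PR_{P}$ is stable in the Noetherian local domain $(R_{P},PR_{P})$. Next, I would invoke the Sally--Vasconcelos result (\cite{SV:1973}) that any stable ideal in a Noetherian local ring is generated by two elements, writing $PR_{P}=(a,b)R_{P}$. Since $\htt(PR_{P})=\htt(P)\geq 2$ while Krull's height theorem yields $\htt(PR_{P})\leq\mu(PR_{P})\leq 2$, we are forced to have equality, so $R_{P}$ is a regular local ring of dimension $2$ with maximal ideal $M:=PR_{P}$ generated by the regular sequence $a,b$.

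To finish, I would show that $(M:M)_{R_{P}}=R_{P}$. If $xM\subseteq M$, expand $xa=\alpha a+\beta b$ and $xb=\gamma a+\delta b$ with coefficients in $R_{P}$; from $(x-\alpha)a=\beta b$ the regular sequence property of $a,b$ gives $\beta=a\beta'$ and $x-\alpha=b\beta'$, so $x\in R_{P}$. Thus stability of $M$ in $R_{P}$ together with $(M:M)_{R_{P}}=R_{P}$ forces $M$ to be invertible in $R_{P}$, hence principal since $R_{P}$ is local. This contradicts the fact that a regular local ring of dimension $2$ has a non-principal maximal ideal, completing the proof.

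The main obstacle is the (routine but essential) verification that stability localizes from $R$ to $R_{P}$; this is where the Noetherian hypothesis is used crucially, via finite generation of $P$. The other key ingredient is the $2$-generation theorem of Sally--Vasconcelos for stable ideals, which together with Krull's height theorem pins down the structure of $R_{P}$ tightly enough to force the contradiction.
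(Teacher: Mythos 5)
Your opening moves are fine: since $P$ is finitely generated, the relation $P(B:P)=B$ with $B=(P:P)$ does localize, so $PR_{P}$ is a stable ideal of the Noetherian local domain $(R_{P},PR_{P})$. The proof breaks at the next step. Whatever the precise form of the Sally--Vasconcelos two-generator theorem (it concerns one-dimensional rings and does not take the form you need), the statement you invoke --- that a stable ideal of a Noetherian local ring is $2$-generated --- is false. Let $K/k$ be a field extension of degree $n\geq 3$ and $R=k+tK[[t]]$, a one-dimensional Noetherian local domain with maximal ideal $\mathfrak{m}=tK[[t]]$. Then $(\mathfrak{m}:\mathfrak{m})=K[[t]]$ and $\mathfrak{m}=t\,K[[t]]$ is principal, hence invertible, over its endomorphism ring, so $\mathfrak{m}$ is (even strongly) stable; yet $\mu(\mathfrak{m})=\dim_{k}\mathfrak{m}/\mathfrak{m}^{2}=[K:k]=n$. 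So you cannot conclude that $PR_{P}$ is $2$-generated, and with that the entire chain ``$R_{P}$ is regular, hence normal, hence $(M:M)=R_{P}$, hence $M$ is invertible over $R_{P}$ itself, hence principal'' collapses: when $(M:M)$ is strictly larger than $R_{P}$, stability of $M$ says nothing about $MM^{-1}$ computed in $R_{P}$. (A secondary, fixable, problem: even granting regularity, your verification of $(M:M)=R_{P}$ applies the regular-sequence property to the coefficient $x-\alpha$, which is not yet known to lie in $R_{P}$; the clean argument is that $(M:M)$ is integral over $R_{P}$ and a regular local ring is normal.)

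The repair is to exploit stability where it actually lives, namely over the overring $E=(M:M)$, which is a finitely generated $R_{P}$-module (it sits inside $m^{-1}R_{P}$ for any $0\neq m\in M$), hence an integral, semilocal extension of $R_{P}$. Invertible ideals of semilocal rings are principal, so $M=ME=aE$; since $E/M$ is a finite module over the field $R_{P}/M$, every maximal ideal of $E$ is minimal over the principal ideal $aE$ and so has height one by the principal ideal theorem, whence $\dim E\leq 1$; but $\dim E=\dim R_{P}=\htt(P)\geq 2$ by integrality, a contradiction. This is in substance the paper's own proof, carried out globally rather than locally: there one sets $T=(P_{2}:P_{2})$, observes that the minimal primes of $P_{2}$ in $T$ have height one because $P_{2}$ is invertible, hence locally principal, in $T$, and derives the contradiction from going-up and incomparability for the integral extension $R\subset T$, which force a height-two prime of $T$ contracting to $P_{2}$.
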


\begin{proof}
Assume $t$-$\dim(R)\geq 2$. Let $(0)\subset P_{1}\subset P_{2}$ be a
chain of $t$-prime ideals of $R$ and $T:=(P_{2}:P_{2})$. Since $R$
is Noetherian, then so is $T$ (as $(R:T)\not=0$) and $T\subseteq
\overline{R}=R'$, where $\overline{R}$ and $R'$ denote respectively
the complete integral closure and the integral closure of $R$. Let
$Q$ be any minimal prime over $P_{2}$ in $T$ and let $M$ be a maximal ideal of $T$
such that $Q\subseteq M$. Then $QT_{M}$ is minimal over $P_{2}T_{M}$
which is principal by $t$-stability. By the principal ideal theorem,
$\htt(Q)=\htt(QT_{M})=1$. By the Going-Up theorem, there is a
height-two prime ideal $Q_{2}$ of $T$ contracting to $P_{2}$ in $R$.
Further, there is a minimal prime ideal $Q$ of $P_{2}$ such that
$P_{2}\subseteq Q\subsetneqq Q_{2}$. Hence $Q\cap R= Q_{2}\cap
R=P_{2}$, which is absurd since the extension $R\subset T$ is INC.
Therefore $t$-$\dim(R)=1$.
\end{proof}

\begin{lemma}\label{sec:2.8}
Let $R$ be a one-dimensional Noetherian domain. If $R$ is Boole $t$-regular, then $R$ is strongly $t$-stable.
\end{lemma}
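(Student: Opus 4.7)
The plan is to fix a $t$-ideal $I$ of $R$ and exhibit an explicit principal generator of $I$ as an ideal of $T := (I:I)$, extracted from the Boolean identity $I = c(I^{2})_{t}$ guaranteed by Lemma~\ref{sec:2.1}(2). Since $R$ is Noetherian, $t = v$ on $R$, so setting $a := c^{-1}$ gives $aI = (I^{2})_{v} \subseteq I_{v} = I$, hence $a \in T$; the task is to show $I = aT$.

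Before verifying this I record the structural facts. Boole $t$-regularity trivially implies Clifford $t$-regularity (the witness is $1$), so by Theorem~\ref{sec:2.2}, $I$ is $t_{1}$-invertible in $T$ and each $t$-ideal is $L$-stable. The ring $T$ is a fractional overring of $R$, hence a finitely generated $R$-module (every $t \in T$ satisfies $xt \in R$ for any fixed nonzero $x \in I$), which makes $T$ Noetherian, integral over $R$, and of dimension one. In a one-dimensional Noetherian domain every maximal $t$-ideal is a maximal ideal, so $t_{1}$-invertibility upgrades to genuine invertibility $I(T:I) = T$.

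The core step verifies $I = aT$ locally at each maximal ideal $M$ of $R$. If $I \not\subseteq M$ then $IR_{M} = R_{M}$ forces $TR_{M} = R_{M}$, and localizing the Boolean identity makes $c$ a unit of $R_{M}$; both $aTR_{M}$ and $IR_{M}$ collapse to $R_{M}$. If $I \subseteq M$, then $R_{M}$ is one-dimensional Noetherian local and $IR_{M}$ is $L$-stable, so Lipman's Lemma~1.11 supplies $y \in R_{M}$ with $(IR_{M})^{2} = y\cdot IR_{M}$; coupled with the invertibility of $IR_{M}$ in $TR_{M}$, this upgrades to $IR_{M} = yTR_{M}$. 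On the other hand, localizing $I = c(I^{2})_{v}$, using that the $v$-operation commutes with localization in the Noetherian setting and with scalar multiplication, and using that $IR_{M}$ is divisorial, one obtains $aIR_{M} = (I^{2}R_{M})_{v} = (yIR_{M})_{v} = yIR_{M}$. Cancelling the invertible factor $IR_{M}$ in $TR_{M}$ forces $aTR_{M} = yTR_{M} = IR_{M}$.

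Since $aT$ and $I$ are finitely generated $R$-submodules of $\qf(R)$ that coincide at every maximal ideal of $R$, they are equal, so $I = aT$ and $R$ is strongly $t$-stable. The main technical hurdle is reconciling the globally defined Boolean scalar $a$ with the locally defined Lipman generator $y$; it is bridged by comparing the two computations of $(I^{2}R_{M})_{v}$ and exploiting the invertibility of $IR_{M}$ in $TR_{M}$ to force $y$ and $a$ to agree up to a unit of $TR_{M}$.
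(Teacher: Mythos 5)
Your argument is correct, but it takes a genuinely different route from the paper's own proof. Both start the same way: from the Boolean identity one extracts the scalar $a$ with $(I^{2})_{t}=aI$, observes $a\in T:=(I:I)$, and reduces the problem to showing that the trace ideal $J:=I(T:I)=a^{-1}I$ equals $T$. From there the paper finishes globally and in a few lines: since $J$ is a trace ideal, $(T:J)=(J:J)=(I:I)=T$, hence $J_{v_{1}}=T$; and since $T$ is a one\-/dimensional Noetherian domain, every maximal ideal of $T$ is divisorial, so a proper $J$ would give $T=J_{v_{1}}\subseteq N_{v_{1}}=N$, a contradiction. You instead import Theorem~\ref{sec:2.2} (Boole implies Clifford $t$-regular, hence $I$ is $t$-invertible in $T$ and $L$-stable), upgrade $t_{1}$-invertibility to honest invertibility, localize at every maximal ideal of $R$, manufacture a local generator $y$ via Lipman's Lemma~1.11, and reconcile $y$ with $a$ by comparing two computations of $(I^{2}R_{M})_{v}$ and cancelling the invertible factor $IR_{M}$. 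This is sound --- the localization identities you need ($(I:I)_{S}=(I_{S}:I_{S})$, $(I_{v})_{S}=(I_{S})_{v}$ for finitely generated $I$, and the persistence of $L$-stability under localization) all hold in the Noetherian setting and are used elsewhere in the paper --- but it is considerably heavier machinery for the same conclusion. One point worth making explicit: the fact that actually powers your upgrade from $t_{1}$-invertibility to invertibility is that every maximal ideal of the one-dimensional Noetherian domain $T$ is a $t$-ideal (being height one, it is an associated prime of a principal ideal, hence divisorial); you state the trivial converse inclusion instead. Once that fact is on the table, the paper's trace-ideal argument already closes the proof with no localization at all, which is what your approach forgoes in exchange for an explicit local picture of the principal generator.
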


\begin{proof}
Let $I$ be a nonzero $t$-ideal of $R$. Set $T:=(I:I)$ and
$J:=I(T:I)$. Since $R$ is Boole $t$-regular, then there is $0\not=
c\in \qf(R)$ such that $(I^{2})_{t}=cI$. Then
$(T:I)=((I:I):I)=(I:I^{2})=(I:(I^{2})_{t})=(I:cI)=c^{-1}(I:I)=c^{-1}T$.
So $J=I(T:I)=c^{-1}I$. Since $J$ is a trace ideal of $T$, then
$(T:J)=(J:J)=(c^{-1}I:c^{-1}I)=(I:I)=T$. Hence $J_{v_{1}}=T$, where
$v_{1}$ is the $v$-operation with respect to $T$.  Since $R$ is
one-dimensional Noetherian domain, then so is $T$ (\cite[Theorem
93]{Kap:1974}). Now, if $J$ is a proper ideal of $T$, then $J\subseteq N$
for some maximal ideal $N$ of $T$. Hence $T=J_{v_{1}}\subseteq
N_{v_{1}}\subseteq T$ and therefore $N_{v_{1}}=T$. Since
$\dim(T)=1$, then each nonzero prime ideal of $T$ is $t$-prime and
since $T$ is Noetherian, then $t_{1}=v_{1}$. So $N=N_{v_{1}}=T$, a
contradiction. Hence $J=T$ and therefore $I=cJ=cT$ is strongly
$t$-stable, as desired.
\end{proof}

\begin{proof}[Proof of Theorem~\ref{sec:2.6}]

$(1)\Longrightarrow (2)$ Clearly $R$ is Boole $t$-regular and, by Lemma~\ref{sec:2.7}, $t$-$\dim(R)=1$.

$(2)\Longrightarrow (1)$ Let $I$ be a nonzero $t$-ideal of $R$. Set
$T:=(I:I)$ and $J:=I(T:I)$. Since $R$ is Boole $t$-regular, then
there is $0\not= c\in \qf(R)$ such that $(I^{2})_{t}=cI$. Then
$(T:I)=((I:I):I)=(I:I^{2})=(I:(I^{2})_{t})=(I:cI)=c^{-1}(I:I)=c^{-1}T$.
So $J=I(T:I)=c^{-1}I$. It suffices to show that $J=T$. Since
$T=(I:I)=(II^{-1})^{-1}$, then $T$ is a divisorial (fractional)
ideal of $R$, and since $J=c^{-1}I$, then $J$ is a divisorial
(fractional) ideal of $R$ too. Now, for each $t$-maximal ideal $M$
of $R$, since $R_{M}$ is a one-dimensional Noetherian domain which
is Boole $t$-regular, by Lemma~\ref{sec:2.8}, $R_{M}$ is strongly
$t$-stable. If $I\not\subseteq M$, then
$T_{M}=(I:I)_{M}=(IR_{M}:IR_{M})=R_{M}$ and
$J_{M}=I(T:I)_{M}=R_{M}$. Assume that $I\subseteq M$. Then $IR_{M}$
is a $t$-ideal of $R_{M}$. Since $R_{M}$ is strongly $t$-stable,
then $IR_{M}=aR_{M}$ for some nonzero $a\in I$. Hence
$T_{M}=(I:I)R_{M}=(IR_{M}:IR_{M})=R_{M}$. Then
$J_{M}=I_{M}(T_{M}:I_{M})=R_{M}=T_{M}$. Hence
$J=J_{t}=\bigcap_{M\in
\Max_{t}(R)}J_{M}=\bigcap_{M\in
\Max_{t}(R)}T_{M}=T_{t}=T$. It follows that $I=cJ=cT$ and therefore
$R$ is strongly $t$-stable.
\end{proof}

An analogue of Theorem~\ref{sec:2.6} does not hold for Clifford $t$-regularity, as shown by the next example.

\begin{example}\label{sec:2.9}
There exists a Noetherian Clifford $t$-regular domain with $t$-$\dim(R)=1$ such that $R$ is not $t$-stable. Indeed, let us first recall that a domain $R$ is said to be pseudo-Dedekind if every $v$-ideal is invertible \cite{Kg:1987}.
In \cite{Sa:1961}, P. Samuel gave an example of a Noetherian UFD domain
$R$ for which $R[[X]]$ is not a UFD. In \cite{Kg:1987}, Kang noted that $R[[X]]$ is a
Noetherian Krull domain which is not pseudo-Dedekind; otherwise, $\Cl(R[[X]])=\Cl(R)=0$ forces $R[[X]]$ to be a UFD, absurd.
Moreover, $R[[X]]$ is a Clifford $t$-regular domain by \cite[Proposition 2.2]{KM2:2007} and clearly $R[[X]]$ has $t$-dimension 1 (since Krull). But for $R[[X]]$ not being a pseudo-Dedekind domain translates into the existence of a $v$-ideal of $R[[X]]$  that is not invertible, as desired.
\end{example}

We recall that a domain $R$ is called strong Mori if it satisfies
the ascending chain condition on $w$-ideals. Noetherian domains are strong Mori. Next we wish to extend
Theorem~\ref{sec:2.6} to the larger class of strong Mori domains.

\begin{thm}\label{sec:2.10}
Let $R$ be a strong Mori domain. Then the following conditions are equivalent:
\begin{enumerate}
\item $R$ is strongly $t$-stable;
\item $R$ is Boole $t$-regular and $t$-$\dim(R)=1$.
\end{enumerate}
\end{thm}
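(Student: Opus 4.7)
The plan is to mirror the proof of Theorem~\ref{sec:2.6}, replacing the Noetherian hypothesis by the strong Mori localization property: for each $M\in\Max_{t}(R)$, the localization $R_{M}$ is Noetherian. Both directions should then transfer with essentially the same computations, once the $t$-operation is known to interact cleanly with localization at $t$-maximal ideals.

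For $(1)\Rightarrow(2)$, Boole $t$-regularity is immediate: writing a strongly stable $t$-ideal as $I=xT$ with $T=(I:I)$ gives $I^{2}=x^{2}T=xI$, so $(I^{2})_{t}=xI$. To obtain $t$-$\dim(R)=1$, I would adapt Lemma~\ref{sec:2.7}. Assuming a chain $(0)\subset P_{1}\subsetneq P_{2}$ of $t$-prime ideals and a $t$-maximal ideal $M\supseteq P_{2}$, the ring $R_{M}$ is Noetherian with $\dim R_{M}\geq 2$. Stability of $P_{2}$ in $R$ passes to stability of $P_{2}R_{M}$ in $R_{M}$, and $(P_{2}R_{M}:P_{2}R_{M})\subseteq\overline{R_{M}}$ is an integral extension. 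The Lemma~\ref{sec:2.7} argument then runs verbatim inside $R_{M}$: a minimal prime over the invertible ideal $P_{2}R_{M}$ has height one by the principal ideal theorem, while Going-Up produces a height-two prime lying over $P_{2}R_{M}$, and the two distinct primes contradict INC.

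For $(2)\Rightarrow(1)$, I would repeat the computation of Theorem~\ref{sec:2.6}: with $T=(I:I)$ and $J=I(T:I)$, Boole $t$-regularity supplies $c$ with $(I^{2})_{t}=cI$, and the same chain of equalities $(T:I)=(I:I^{2})=(I:(I^{2})_{t})=c^{-1}T$ yields $J=c^{-1}I$. Since $R$ is Mori we have $t=v$, so $T=(II^{-1})^{-1}$ and $J$ are divisorial fractional ideals. The remaining task is to show $J=T$. For each $t$-maximal $M$, $R_{M}$ is Noetherian (strong Mori) and one-dimensional (since $t$-$\dim R=1$) and inherits Boole $t$-regularity, so by Lemma~\ref{sec:2.8} it is strongly $t$-stable; hence $IR_{M}$ is principal in $R_{M}$ when $I\subseteq M$ and equal to $R_{M}$ otherwise, and in either case $T R_{M}=R_{M}=JR_{M}$. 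An intersection formula over $\Max_{t}(R)$ then forces $J=T$, and therefore $I=cT$ is strongly stable.

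The main obstacle is to verify that the strong Mori hypothesis actually supplies the three ingredients the Noetherian proof used without comment: (i) $R_{M}$ is Noetherian for every $M\in\Max_{t}(R)$, and one-dimensional under $t$-$\dim(R)=1$; (ii) Boole $t$-regularity descends from $R$ to such $R_{M}$; and (iii) a divisorial ideal of $R$ is recovered as the intersection of its localizations at $t$-maximal ideals. These are standard features of strong Mori domains, but they are the hinge on which the Noetherian argument pivots; any step that touches the $t$-operation under localization must be patched through them.
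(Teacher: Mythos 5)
Your proposal is correct and follows essentially the same route as the paper: localize at $t$-maximal ideals, use that $R_M$ is Noetherian (the strong Mori localization theorem) to invoke Theorem~\ref{sec:2.6}/Lemma~\ref{sec:2.8} locally, and globalize via the intersection of localizations of divisorial ideals. The ``standard features'' you defer to are exactly what the paper isolates as its Facts 1 and 2 (namely, $t$-ideals of a Mori domain are $v$-finite, so $I_S$ is again a $v$-ideal and $(I:I)_S=(I_S:I_S)$), so nothing essential is missing.
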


\begin{proof}
We recall first the following useful facts:

\smallskip{\bf Fact 1} (\cite[Lemma 5.11]{Kg:1987}). Let $I$ be a finitely generated ideal of
a Mori domain $R$ and $S$ a multiplicatively closed subset of $R$.
Then $(I_{S})_{v}=(I_{v})_{S}$. In particular, if $I$ is a $t$-ideal
(i.e., $v$-ideal) of $R$, then $I$ is $v$-finite, that is, $I=A_{v}$
for some finitely generated subideal $A$ of $I$. Hence
$(I_{S})_{v}=((A_{v})_{S})_{v}=((A_{S})_{v})_{v}=(A_{S})_{v}=(A_{v})_{S}=I_{S}$
and therefore $I_{S}$ is a $v$-ideal of $R_{S}$.

\smallskip{\bf Fact 2.} For each $v$-ideal $I$ of $R$ and each multiplicatively
closed subset $S$ of $R$, $(I:I)_{S}=(I_{S}:I_{S})$. Indeed, set
$I=A_{v}$ for some finitely generated subideal $A$ of $I$ and let $x\in
(I_{S}:I_{S})$. Then $xA\subseteq xA_{v}=xI\subseteq xI_{S}\subseteq
I_{S}$. Since $A$ is finitely generated, then there exists $\mu \in S$ such that
$x\mu A\subseteq I$. So $x\mu I=x\mu A_{v}\subseteq I_{v}=I$. Hence
$x\mu \in (I:I)$ and then $x\in (I:I)_{S}$. It follows that
$(I:I)_{S}=(I_{S}:I_{S})$.

\smallskip $(1)\Longrightarrow (2)$ Clearly $R$ is Boole $t$-regular. Let $M$ be
a maximal $t$-ideal of $R$. Then $R_{M}$ is a Noetherian domain
(\cite[Theorem 1.9]{FM:1999}) which is strongly $t$-stable. By
Theorem~\ref{sec:2.6}, $t$-$\dim(R_{M})=1$. Since $MR_{M}$ is a
$t$-maximal ideal of $R_{M}$ (Fact 1), then
$\htt(M)=\htt(MR_{M})=1$. Therefore $t$-$\dim(R)=1$.

$(2)\Longrightarrow (1)$ Let $I$ be a nonzero $t$-ideal of $R$. Set
$T:=(I:I)$ and $J:=I(T:I)$. Since $R$ is Boole $t$-regular, then
$(I^{2})_{t}=cI$ for some nonzero $c\in \qf(R)$. So $J=c^{-1}I$.
Since $J$ and $T$ are (fractional) $t$-ideals of $R$, to show that
$J=T$, it suffices to show it $t$-locally. Let $M$ be a $t$-maximal
ideal of $R$. Since $R_{M}$ is one-dimensional Noetherian domain
which is Boole $t$-regular, by Theorem~\ref{sec:2.6}, $R_{M}$ is
strongly $t$-stable. By Fact 1, $I_{M}$ is a $t$-ideal of $R_{M}$.
So $I_{M}=a(I_{M}:I_{M})$. Now, by Fact 2,
$T_{M}=(I:I)_{M}=(I_{M}:I_{M})$ and then $I_{M}=aT_{M}$. Hence
$J_{M}=I_{M}(T_{M}:I_{M})=T_{M}$, as desired.
\end{proof}

We close the paper with the following discussion about the limits as well as possible extensions of the above results.

\begin{remark}\label{sec:2.11}
\begin{xenumerate}
\item Unlike Clifford regularity, Clifford (or even Boole) $t$-regularity does not force a strong Mori domain to be Noetherian. Indeed, it suffices to consider a UFD domain which is not Noetherian.

\item Example~\ref{sec:2.4} provides a Noetherian Boole $t$-regular domain of $t$-dimension two. We do not know whether the assumption ``$t$-$\dim(R)=1$'' in Theorem~\ref{sec:2.2} can be omitted.

\item Following \cite[Proposition 2.3]{KM:2003}, the complete integral closure $\overline{R}$ of a Noetherian Boole regular domain $R$ is a PID. We do not know if $\overline{R}$ is a UFD in the case of Boole $t$-regularity. However, it's the case if the conductor $(R:\overline{R})\not=0$. Indeed, it's clear that $\overline{R}$ is a Krull domain. But $(R:\overline{R})\not =0$ forces $\overline{R}$ to be Boole $t$-regular, when $R$ is Boole $t$-regular, and by \cite[Proposition 2.2]{KM2:2007}, $\overline{R}$ is a UFD.

\item The Noetherian domain provided in Example~\ref{sec:2.4} is not strongly $t$-discrete since its maximal ideal is $t$-idempotent. We do not know if the assumption ``$R$ strongly $t$-discrete, i.e., $R$ has no $t$-idempotent $t$-prime ideals'' forces a Clifford $t$-regular Noetherian domain to be of $t$-dimension one.
\end{xenumerate}
\end{remark}

\end{section}


\end{document}